\providecommand{\U}[1]{\protect\rule{.1in}{.1in}}
\numberwithin{equation}{section}
\newtheorem{theorem}{Theorem}[section]
\newtheorem{proposition}[theorem]{Proposition}
\newtheorem{lemma}[theorem]{Lemma}
\begin{document}
\title[Navier-Stokes equations ]{Uniqueness criterion of weak solutions for the 3D Navier-Stokes equations}
\author{Abdelhafid Younsi}
\address{Department of Mathematics and Computer Science, University of Djelfa , Algeria.}
\email{younsihafid@gmail.com}
\subjclass[2000]{35Q30, 35A02, 35D30}
\keywords{Navier-Stokes equations - weak solutions - uniqueness.}

\begin{abstract}
In this paper we establish a new uniqueness result of weak solutions for the
3D Navier-Stokes equations. Under assumption that there is not uniqueness of
weak solution in singular time, we prove that if two weak solutions $u$ and
$v$ of 3D Navier-Stokes equations belong to $L^{\frac{5}{2}}\left(
0,T;V\right)  $ with the same initial datum, then we get $u=v$. In the class
$L^{3}\left(  0,T;V\right)  $, we prove that $u-v\in$ $C^{0}\left(
0,T;H\right)  $ and $u=v\ $when $u_{0}=v_{0}$.

\end{abstract}
\maketitle

\section{Introduction}

For the three dimensional Navier-Stokes system weak solutions are known to
exist by a basic result due to J. Leray from 1934 \cite{5}. It is not known if
a weak solution is unique or what further assumption could make it unique.
Therefore, the uniqueness of weak solutions remains as an open problem.

There are many results that give sufficient conditions for regularity of weak
solutions \cite{4, 6, 7} and \cite{10}. The Prodi-Serrin condition (see Serrin
\cite{8}, Prodi \cite{6}) states that any weak Leray-Hopf solution verifying
$u\in L^{p}(0,\infty;L^{q}\left(
%TCIMACRO{\U{211d} }%
%BeginExpansion
\mathbb{R}
%EndExpansion
^{3}\right)  $ with $2/p+3/q=1$, $2\leq p<\infty$, is regular on $\left(
0,\infty\right)  \times%
%TCIMACRO{\U{211d} }%
%BeginExpansion
\mathbb{R}
%EndExpansion
^{3}$. In 1995, Beirao \cite{1} obtained uniqueness results in the class
$L^{4}(0,T;V)$.

It was Leray's conjecture, that the weak solutions of 3D Navier-Stokes
equations develop singularities (\cite{7, 9, 10}) and it is not yet proved or
disproved. For any weak solution the set of singular times $\Sigma=\left\{
t\in\left[  0,T\right]  ;\Vert u\left(  t\right)  \Vert_{H^{1}}=\infty
\right\}  $, Leray \cite{5} has proved that Lebesgue measure of $\Sigma$ is
zero. Caffarelli et al \cite{4} proved that $\Sigma$ has one-dimensional
Hausdorff measure zero; see also Sohr \cite{9} and Scheffer \cite{7}.

In this paper, we are interested in the uniqueness of Leray-Hopf weak
solutions to the 3D Navier-Stokes equations. In our main result, we prove that
if there is not uniqueness when $t\in\Sigma$, then the Leray-Hopf solutions
are unique in the class $L^{\frac{5}{2}}(0,T;V)$. Under the same assumption,
our second result states if two weak solutions $u$ and $v$ are in
$L^{3}\left(  0,T;V\right)  $, then their difference $w=u-v$ is strong
continuous from $\left[  0,T\right]  $ into $H$ and $u=v$ when $u_{0}=v_{0}$.

\section{Uniqueness criterion for weak solutions}

In this paper, we consider the three-dimensional Navier-Stokes system
\begin{equation}%
\begin{array}
[c]{c}%
\dfrac{\partial u}{\partial t}+u.\nabla u=\nu\triangle u+\nabla p+f,\text{
}t>0,\\
\text{div }u=0\text{, in }\Omega\times\left(  0,\infty\right)  \text{,
}u=0\text{ on }\partial\Omega\times\left(  0,\infty\right)  \text{ and
}u\left(  x,0\right)  =u_{0}\text{, in }\Omega\text{,}%
\end{array}
\label{1}%
\end{equation}
where $u=u\left(  x,t\right)  \ $is the velocity vector field, $p\left(
x,t\right)  $ is a scalar pressure, $f$ is a given force field and $\nu>$ $0$
is the viscosity of the fluid. $u\left(  x,0\right)  $ with div$u_{0}=0$ in
the sense of distribution is the initial velocity field, and $\Omega$ is a
regular, open, bounded subset of $%
%TCIMACRO{\U{211d} }%
%BeginExpansion
\mathbb{R}
%EndExpansion
^{3}$ with smooth boundary $\partial\Omega$. We denote by $H^{1}\left(
\Omega\right)  $, the Sobolev space. We define the usual function spaces
$\mathcal{V}=\left\{  u\in C_{0}^{\infty}\left(  \Omega\right)  \text{: div
}u=0\right\}  $, $V=$ closure of $\mathcal{V}$ in $H_{0}^{1}\left(
\Omega\right)  $ and $H=$ closure of $\mathcal{V}$ in $L^{2}(\Omega)$. The
space $H$ is equipped with the scalar product $(.,.)$ induced by $L^{2}%
(\Omega)$ and the norm $\left\Vert .\right\Vert _{L^{2}}$ and the space $V$ is
equipped with the norm $\left\Vert .\right\Vert _{H^{1}}$. We denote by
$H^{\prime}$ and $V^{\prime}$ the dual spaces of $H$ and $V$.

We recall that a Leray weak solution of the Navier-Stokes equations is a
bounded solution and weakly continuous from $\left[  0,T\right]  $ into $H$,
whose gradient is square-integrable in space and time and that satisfies the
energy inequality. The proof of the following theorem is given in
\cite[Theorem 3.1 P21]{10} and \cite[Theorem (Leray) P71]{2}.

\begin{theorem}
Let $\Omega\subset%
%TCIMACRO{\U{211d} }%
%BeginExpansion
\mathbb{R}
%EndExpansion
^{3}$, $f\in L^{2}(0,T;V^{\prime})$ and $u_{0}\in V_{0}$ be given. Then there
exists a weak solution $u$ of the 3D Navier-Stokes equations $(\ref{1}%
)$\ which satisfies $u\in L^{2}(0,T;V)\cap L^{\infty}(0,T;H),\forall T>0.$
\end{theorem}

Let $w=u-v$ the difference of two solutions $u$ and $v$ satisfying the same
initial and Theorem 2.1. We introduce the following quantity%
\begin{equation}
\Lambda=\rho_{1}\Vert w\left(  .,t\right)  \Vert_{L^{2}}^{\frac{3}{2}}%
+\frac{\rho_{2}}{1+\Vert u\left(  .,t\right)  \Vert_{H^{1}}}, \label{2}%
\end{equation}
with $\rho_{1}$ and $\rho_{2}$ two positive constants. If the weak solutions
of the Navier-Stokes equations develop singularities, as Leray has
conjectured, see \cite{2}, \cite{10} and references therein, it follows that
$\Vert u\left(  t\right)  \Vert_{H^{1}}$ is finite for all $t\notin\Sigma$.
Thus, $\Lambda$ is finite and different to zero when $t\notin\Sigma$ and then
$1/\Lambda\ $is bounded. For $t\in\Sigma$ yields
\begin{equation}
\Lambda\rightarrow\rho_{1}\left\Vert w\left(  t\right)  \right\Vert _{L^{2}%
}^{\frac{3}{2}}\text{ and }\Vert u\left(  t\right)  \Vert_{H^{1}}=\infty.
\label{2a}%
\end{equation}
Therefore, $1/\Lambda\ $is unbounded if
\begin{equation}
\left\Vert w\left(  t\right)  \right\Vert _{L^{2}}=0\text{ when }\Vert
u\left(  t\right)  \Vert_{H^{1}}=\infty, \label{2b}%
\end{equation}
this case is an open problem in the theory of Navier-Stokes equations. For
related question, we refer to Constantin \cite[Chapter Ten]{2}
\textquotedblleft\ Loss of regularity in blow up\textquotedblright. In order
to overcome this difficulty, we assume in what follows that
\begin{equation}
\left\Vert w\left(  t\right)  \right\Vert _{L^{2}}\neq0\text{ when }\Vert
u\left(  t\right)  \Vert_{H^{1}}=\infty, \label{2c}%
\end{equation}
This assumption assures that $\Lambda\neq0$ for all $t\geq0$. According to
this hypothesis we show the following uniqueness theorem.

\begin{theorem}
Let $u$ a weak solution of the 3D Navier-Stokes equations satisfying
$(\ref{1})$. If $u\in L^{\frac{5}{2}}(0,T;V)$, then such a solution $u$ is unique.
\end{theorem}

\begin{proof}
Let $w=$ $u-v$ the difference of two weak solutions $u$ and $v$ for the 3D
Navier-Stokes equations $(\ref{1})$ satisfying the same initial. Recall that
$w$ satisfies the following differential inequality%
\begin{equation}
\frac{1}{2}\frac{d}{dt}\left\Vert w\right\Vert _{L^{2}}^{2}+\nu\left\Vert
w\right\Vert _{H^{1}}^{2}\leq c\Vert w\Vert_{H^{1}}^{\frac{3}{2}}\Vert
w\Vert_{L^{2}}^{\frac{1}{2}}\Vert u\Vert_{H^{1}}, \label{3}%
\end{equation}
$c$ is a positive constant, see \cite{1} and \cite{9, 10}.\ Under assumption
that $\left\Vert w\right\Vert _{L^{2}}\neq0$ for all $t$ $\in\Sigma$, it
follows that $\Lambda$ is finite and different to zero\ for all $t\geq0$ and
thus $\frac{1}{\Lambda}$ is finite for all $t\geq0$. Consequently there exists
a positive constant $\mu$ such that
\begin{equation}
\frac{1}{\Lambda}=\mu<\infty,\forall\text{\ }t\geq0. \label{4}%
\end{equation}
From the differential inequality $(\ref{3})$ we get for all $t\geq0$%
\begin{equation}
\frac{1}{2}\frac{d}{dt}\left\Vert w\right\Vert _{L^{2}}^{2}+\nu\left\Vert
w\right\Vert _{H^{1}}^{2}\leq c\frac{\Lambda}{\Lambda}\Vert w\Vert_{H^{1}%
}^{\frac{3}{2}}\Vert w\Vert_{L^{2}}^{\frac{1}{2}}\Vert u\Vert_{H^{1}}.
\label{5}%
\end{equation}
Using $\mu$ in $(\ref{5})$, it follows that
\begin{equation}
\frac{1}{2}\frac{d}{dt}\left\Vert w\right\Vert _{L^{2}}^{2}+\nu\left\Vert
w\right\Vert _{H^{1}}^{2}\leq c\rho_{1}\mu\Vert w\Vert_{L^{2}}^{2}\Vert
w\Vert_{H^{1}}^{\frac{3}{2}}\Vert u\Vert_{H^{1}}+c\rho_{2}\mu\Vert
w\Vert_{H^{1}}^{\frac{3}{2}}\Vert w\Vert_{L^{2}}^{\frac{1}{2}}, \label{6}%
\end{equation}
where we have used that $\frac{\Vert u\Vert_{H^{1}}}{1+\Vert u\Vert_{H^{1}}%
}\leq1$ for all $t\geq0$. Using the Poincar\'{e} inequality in the second term
on the right-hand side of $(\ref{6})$, and by taking $\rho_{1}=\dfrac{1}{\mu}$
and $\rho_{2}=\dfrac{\nu\lambda_{1}^{\frac{1}{4}}}{2c\mu}$, the following
estimate holds%
\begin{equation}
\frac{1}{2}\frac{d}{dt}\left\Vert w\right\Vert _{L^{2}}^{2}+\nu\left\Vert
w\right\Vert _{H^{1}}^{2}\leq c\Vert w\Vert_{L^{2}\left(  \Omega\right)  }%
^{2}\Vert w\Vert_{H^{1}}^{\frac{3}{2}}\Vert u\Vert_{H^{1}}+\frac{\nu}{2}\Vert
w\Vert_{H^{1}}^{2}. \label{7}%
\end{equation}
Thus, the right-hand side of $(\ref{7})$ is independent of $\mu$. Finally, we
obtain%
\begin{equation}
\frac{d}{dt}\left\Vert w\right\Vert _{L^{2}}^{2}\leq c\Vert w\Vert_{L^{2}}%
^{2}\Vert w\Vert_{H^{1}}^{\frac{3}{2}}\Vert u\Vert_{H^{1}}. \label{8}%
\end{equation}
Applying Gronwall's inequality on $(\ref{8})$ in the interval $\left[
0,T\right]  $ yields
\begin{equation}
\left\Vert w\left(  t\right)  \right\Vert _{L^{2}}^{2}\leq c\Vert w\left(
0\right)  \Vert_{L^{2}}^{2}\exp\left(  \int_{0}^{T}\Vert u\Vert_{H^{1}}\Vert
w\Vert_{H^{1}}^{\frac{3}{2}}dt\right)  . \label{9}%
\end{equation}
Applying Holder's inequality on $\int_{0}^{T}\Vert u\Vert_{H^{1}}\Vert
w\Vert_{H^{1}}^{\frac{3}{2}}dt$ with $p=\frac{5}{2}$ and $q=\frac{5}{3}$, we
find%
\begin{equation}
\int_{0}^{T}\Vert u\Vert_{H^{1}}\Vert w\Vert_{H^{1}}^{\frac{3}{2}}%
dt\leq\left(  \int_{0}^{T}\Vert u\Vert_{H^{1}}^{\frac{5}{2}}dt\right)
^{\frac{2}{5}}+\left(  \int_{0}^{T}\Vert w\Vert_{H^{1}}^{\frac{5}{2}%
}dt\right)  ^{\frac{3}{5}}. \label{10}%
\end{equation}
Therefore, since $u\in L^{\frac{5}{2}}(0,T;V)$ and\ $\Vert w\left(  0\right)
\Vert_{L^{2}}=0$, it follows from $(\ref{9})$ and $(\ref{10})$ that
$\left\Vert w\left(  t\right)  \right\Vert _{L^{2}}=0$ for all $t\leq T$. The
theorem is thus proved.
\end{proof}

Since we only have $\partial_{t}u\in L^{\frac{4}{3}}\left(  0,T;V^{\prime
}\right)  $ and $u\in L^{2}\left(  0,T;V\right)  $ the continuity of the weak
solutions of the 3D Navier-Stokes equations is known to be proved only in this
weak sense, see \cite{10} and the references therein. The purpose of the
following lemma is to establish the continuity of $\left\Vert w\right\Vert
_{L^{2}}$ and gives enough regularity for $w$ to deduce that $\left(
\partial_{t}w,w\right)  =\frac{d}{dt}\left\Vert w\right\Vert ^{2}$.

\begin{lemma}
If $u$ and $v$ are two weak solutions of the 3D Navier-Stokes equations
$(\ref{1})$ with $u$, $v\in L^{3}\left(  0,T;V\right)  $, then $w=u-v$ is a
continuous function $\left[  0,T\right]  \rightarrow H$.
\end{lemma}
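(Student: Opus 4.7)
The plan is to derive an evolution equation for $w=u-v$, show that its time derivative lies in $L^{2}(0,T;V_{1}^{\prime})$, and then invoke the classical Lions--Magenes interpolation lemma to conclude that $w \in C([0,T];V_{0})$.

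First I would subtract the weak formulations of (\ref{6}) written for $u$ and for $v$. Since both solutions share the same forcing $f$ and initial data handling at the level of distributions, the difference satisfies
\[
\frac{\partial w}{\partial t}+\nu A w + B(u,u)-B(v,v)=0
\]
in $\mathcal{D}^{\prime}(0,T;V_{1}^{\prime})$. Using the bilinearity of $B$, I would rewrite
\[
B(u,u)-B(v,v)=B(w,u)+B(v,w),
\]
so that
\[
\frac{\partial w}{\partial t}=-\nu A w - B(w,u) - B(v,w).
\]

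Next I would estimate the right-hand side in the $V_{1}^{\prime}$ norm. The linear term $\nu A w$ lies in $L^{\infty}(0,T;V_{1}^{\prime})$ since $w\in L^{\infty}(0,T;V_{1})$. For the convective terms, I would use H\"older's inequality combined with the 3D Sobolev embedding $H^{1}\hookrightarrow L^{6}\hookrightarrow L^{3}$ to bound, for any test function $\phi\in V_{1}$,
\[
|\langle B(w,u),\phi\rangle|=|b(w,u,\phi)|\leq \|w\|_{L^{3}}\|\nabla u\|_{L^{2}}\|\phi\|_{L^{6}}\leq c_{1}\|w\|_{1}\|u\|_{1}\|\phi\|_{1},
\]
and analogously for $B(v,w)$. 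Since $w\in L^{\infty}(0,T;V_{1})$ and $u,v\in L^{2}(0,T;V_{1})$ by Theorem 3.1, both $B(w,u)$ and $B(v,w)$ belong to $L^{2}(0,T;V_{1}^{\prime})$. Collecting these estimates gives $\partial_{t}w \in L^{2}(0,T;V_{1}^{\prime})$.

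Finally, combining $w\in L^{2}(0,T;V_{1})$ with $\partial_{t}w\in L^{2}(0,T;V_{1}^{\prime})$, the classical Lions--Magenes lemma on the Gelfand triple $V_{1}\hookrightarrow V_{0}\hookrightarrow V_{1}^{\prime}$ shows that $w$ coincides almost everywhere with a continuous function from $[0,T]$ into $V_{0}$. The only real obstacle is the trilinear bound: one must make sure that the embedding constants in $H^{1}\hookrightarrow L^{3}, L^{6}$ are used correctly so that both $B(w,u)$ and $B(v,w)$ are controlled without requiring more regularity than the hypothesis $w \in L^{\infty}(0,T;V_{1})$ already provides. Everything else is a direct application of standard functional-analytic machinery.
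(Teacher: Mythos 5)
Your proposal is correct and follows essentially the same route as the paper: the same decomposition $\partial_t w=-\nu Aw-B(w,u)-B(v,w)$, the same conclusion $\partial_t w\in L^{2}(0,T;V_{1}^{\prime})$ under the hypothesis $w\in L^{\infty}(0,T;V_{1})$, and the same final appeal to the Lions--Magenes interpolation lemma (the paper cites Temam, Lemma III.1.2) on the triple $V_{1}\hookrightarrow V_{0}\hookrightarrow V_{1}^{\prime}$. The only difference is cosmetic: the paper first uses the antisymmetry $b(u,v,w)=-b(u,w,v)$ to shift the derivative onto the test function before applying H\"older, whereas you estimate $b(w,u,\phi)$ and $b(v,w,\phi)$ directly with the exponents $(3,2,6)$; both versions close under the stated hypothesis.
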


\begin{proof}
We consider two solutions $u$ and $v$ of $(\ref{1})$, and write the equation
for their difference $w=u-v$. Then $w$ satisfies%
\begin{equation}%
\begin{array}
[c]{ll}%
\partial_{t}w & =-\triangle w-B\left(  u,w\right)  -B\left(  w,v\right) \\
& =-\triangle w-B\left(  v,w\right)  -B\left(  w,u\right)  .
\end{array}
\label{11}%
\end{equation}
Clearly $\triangle u\in L^{2}\left(  0,T;V^{\prime}\right)  $, since $u\in
L^{2}\left(  0,T;V\right)  $. So we consider, for $\phi\in V_{1}$ with
$\left\Vert \nabla\phi\right\Vert _{L^{2}}=1$, we obtain the following
estimate%
\begin{equation}%
\begin{array}
[c]{ll}%
\left\vert \left(  B\left(  v,w\right)  ,\phi\right)  -\left(  B\left(
w,u\right)  ,\phi\right)  \right\vert  & \leq\left\vert \left(  B\left(
v,\phi\right)  ,w\right)  -\left(  B\left(  w,\phi\right)  ,u\right)
\right\vert \\
& \leq\left(  \left\Vert u\right\Vert _{L^{4}}+\left\Vert v\right\Vert
_{L^{4}}\right)  \left\Vert \nabla\phi\right\Vert _{L^{2}}\left\Vert
w\right\Vert _{L^{4}}\\
& \leq\left(  \left\Vert u\right\Vert _{L^{2}}^{1/4}\left\Vert u\right\Vert
_{H^{1}}^{3/4}+\left\Vert v\right\Vert _{L^{2}}^{1/4}\left\Vert v\right\Vert
_{H^{1}}^{3/4}\right)  \left\Vert w\right\Vert _{L^{4}}\\
& \leq\left(  \left\Vert u\right\Vert _{L^{2}}^{1/2}\left\Vert u\right\Vert
_{H^{1}}^{3/4}+\left\Vert v\right\Vert _{L^{2}}^{1/2}\left\Vert v\right\Vert
_{H^{1}}^{3/4}\right)  \left\Vert w\right\Vert _{L^{2}}^{1/4}\left\Vert
w\right\Vert _{H^{1}}^{3/4}.
\end{array}
\label{12}%
\end{equation}
Which implies the following inequality%
\begin{equation}
\left\vert \left(  B\left(  v,w\right)  ,\phi\right)  -\left(  B\left(
w,u\right)  ,\phi\right)  \right\vert \leq\left(  \left\Vert u\right\Vert
_{H^{1}}^{3/4}+\left\Vert v\right\Vert _{H^{1}}^{3/4}\right)  \left\Vert
w\right\Vert _{H^{1}}^{3/4}. \label{13}%
\end{equation}
Then, we have%
\begin{equation}
\left\Vert B\left(  v,w\right)  -B\left(  w,u\right)  \right\Vert _{V^{\prime
}}^{2}\leq\left(  \left\Vert u\right\Vert _{H^{1}}^{3/2}\left\Vert
w\right\Vert _{H^{1}}^{3/2}+\left\Vert v\right\Vert _{H^{1}}^{3/2}\left\Vert
w\right\Vert _{H^{1}}^{3/2}\right)  . \label{14}%
\end{equation}
Integrating $(\ref{14})$ in time and using Holder's inequality with $p=q=2$,
we see that%
\begin{equation}%
\begin{array}
[c]{l}%
\int_{0}^{T}\left\Vert B\left(  v,w\right)  -B\left(  w,u\right)  \right\Vert
_{V^{\prime}}^{2}dt\leq\\
\multicolumn{1}{c}{\left(  \left(  \int_{0}^{T}\left\Vert u\right\Vert
_{H^{1}}^{3}dt\right)  ^{\frac{1}{2}}+\left(  \int_{0}^{T}\left\Vert
v\right\Vert _{H^{1}}^{3}dt\right)  ^{\frac{1}{2}}\right)  \left(  \int
_{0}^{T}\left\Vert w\right\Vert _{H^{1}}^{3}dt\right)  ^{\frac{1}{2}},}%
\end{array}
\label{15}%
\end{equation}
under the assumption that $u$, $v\in L^{3}\left(  0,T;V\right)  $ and since
$w\in L^{2}\left(  0,T;V\right)  $ it follows that $\partial_{t}w\in
L^{2}\left(  0,T;V^{\prime}\right)  $ and hence $w\in C^{0}\left(
0,T;H\right)  $, see \cite[Lemma III. 1. 2]{10}.
\end{proof}

We may summarize the preceding results as follows.

\begin{proposition}
Let $u$, $v$ two weak solutions of the 3D Navier-Stokes equations $(\ref{1})$
with the same initial datum $u\left(  0\right)  =v\left(  0\right)  $ and $u$,
$v\in L^{3}(0,T;V)$, then $w=u-v\in C\left(  0,T;H\right)  $ and
\begin{equation}
u=v\text{ for all }0\leq t\leq T. \label{16}%
\end{equation}

\end{proposition}

Let $\mathcal{I}^{\ast}=\left[  0,T^{\ast}\right]  $ be the maximal interval
contains $0$, on wich the weak solution given in Theorem 2.1 is finite in the
$H^{1}$-norm. We have $\mathcal{I}^{\ast}\subset C_{%
%TCIMACRO{\U{211d} }%
%BeginExpansion
\mathbb{R}
%EndExpansion
^{+}}\Sigma\ $and $T^{\ast}$ the maximal time of regularity in $\mathcal{I}%
^{\ast}$. Our results remains true in $\mathcal{I}^{\ast}$ and also if
$\Sigma$ is empty. In Serrin uniqueness result, for $p=5/2$ we get $q=15$ and
for $p=3$ we get $q=9$, which means an important improvement.

\end{document}